\newtheorem{thm}[subsection]{Theorem}
\newtheorem{prop}[subsection]{Proposition}
\newtheorem{cor}[subsection]{Corollary}
\newtheorem{lemma}[subsection]{Lemma}
\theoremstyle{definition}  
\newtheorem{ex}[subsection]{Example}
\newtheorem{remark}[subsection]{Remark}
\newcommand{\cat}{\EuScript}    
\newcommand{\cA}{{\cat A}}      
\newcommand{\ra}{\rightarrow}                   
\newcommand{\map}{\rightarrow}
\newcommand{\field}[1]  {\mathbb #1} 
\newcommand{\A}         {\field A}
\newcommand{\F}         {\field F}
\newcommand{\R}         {\field R}
\newcommand{\Z}         {\field Z}
\newcommand{\C}         {\field C}
\newcommand{\M}         {\field M}
\DeclareMathOperator{\Ext}{Ext}
\newcommand{\sA}{\cA}
\newcommand{\dfn}{\textbf} 
\newcommand{\mdfn}[1]{\dfn{\mathversion{bold}#1}} 
\newcommand{\Smash}             {\wedge}
\numberwithin{equation}{subsection}
\newenvironment{myequation}
  {\addtocounter{subsection}{1}\begin{eqnarray}}
  {\end{eqnarray}$\!\!$}
\newcommand{\AIexc}{\textcolor{red}}
\newcommand{\same}{\cellcolor{gray!25}}
\newcommand{\hpi}{\hat{\pi}}
\newcommand{\hpimot}{\hat{\pi}^{\R}}
\newcommand{\pimot}{\pi^{\R}}
\newcommand{\hpieq}{\hat{\pi}^{\Z/2}}
\newcommand{\pieq}{\pi^{\Z/2}}
\newcommand{\Amot}{\sA_{\R}}
\newcommand{\Aeq}{\sA_{\Z/2}}
\newcommand{\Mmot}{\M^{\R}_2}
\newcommand{\Meq}{\M^{\Z/2}_2}
\newcommand{\Extmot}{\Ext_{\R}}
\newcommand{\Exteq}{\Ext_{\Z/2}}
\newcommand{\augmot}{\overline{\sA}_{\R}}
\newcommand{\augeq}{\overline{\sA}_{\Z/2}}
\newcommand{\cirradnew}{0.1}
\begin{document}

\title{$\Z/2$-equivariant and $\R$-motivic stable stems}

\author{Daniel Dugger}
\address{Department of Mathematics\\ University of Oregon\\ Eugene, OR
97403} 
\email{ddugger@math.uoregon.edu}

\author{Daniel C.\ Isaksen}
\address{Department of Mathematics \\ Wayne State University\\
Detroit, MI 48202}
\email{isaksen@wayne.edu}


\begin{abstract}
We establish an isomorphism between the 
stable homotopy groups $\hpimot_{s,w}$ 
of the 2-completed $\R$-motivic sphere spectrum
and the stable homotopy groups
$\hpieq_{s,w}$ of the 2-completed $\Z/2$-equivariant sphere spectrum
when $s \geq 3 w - 5$ or $s \leq -1$.
\end{abstract}

\maketitle


\section{Introduction}

This paper is a sequel to \cite{DI15}, where we 
computed some of the stable homotopy groups
of the 2-completed motivic sphere spectrum over the ground field $\R$. 
Here we explain that in a
certain range these groups
agree with the analogous $\Z/2$-equivariant (but non-motivic) stable homotopy 
groups.

There is an equivariant realization functor from $\R$-motivic stable
homotopy theory to $\Z/2$-equivariant homotopy theory,
induced by assigning to every scheme $X$ over $\R$ the associated
analytic space $X(\C)$ with complex conjugation \cite{MV99}*{Section 3.3}.  
This induces a map
\begin{myequation}
\label{eq:compare}
 \hpimot_{*,*} \ra \hpieq_{*,*}
\end{myequation}
of bigraded rings,
where the domain is the stable homotopy ring of the 2-completed
$\R$-motivic sphere spectrum, and the
target is the stable homotopy ring of the 2-completed
$\Z/2$-equivariant sphere spectrum.
Each group $\hpieq_{s,w}$ is finitely-generated,
so the $2$-completion on the right is very mild.
The reader should beware that the stable homotopy groups of the
$2$-completed motivic sphere are not necessarily the same as the
algebraic $2$-completions of the stable homotopy groups of the
uncompleted motivic sphere.  One must account for $\eta$-completion
as well \cite{HKO11b}*{Theorem 1}.
For the purposes of this paper, 
$\hpimot_{*,*}$ and $\hpieq_{*,*}$ can be defined as the objects 
to which the $\R$-motivic and $\Z/2$-equivariant Adams spectral sequences converge,
respectively.

The $\Z/2$-equivariant stable homotopy groups
were computed in a range by Araki and Iriye \cites{AI,I}, although the
method of computation and statements of results are difficult
to navigate.
A goal of the
work begun in \cite{DI15}
is to better understand the
Araki-Iriye results by lifting as much as possible 
back to $\R$-motivic homotopy theory via the map
(\ref{eq:compare}).  The present paper demonstrates that this is possible
in a range.

\subsection{Equivariant homotopy groups}

Recall that $\R^{1,1}$ denotes the real line with the sign
representation of $\Z/2$, whereas $\R^{1,0}$ denotes the real line
with the trivial representation.  For $p\geq q$ one sets 
\[ \R^{p,q}=(\R^{1,0})^{\oplus (p-q)} \oplus (\R^{1,1})^{\oplus q},
\]
and $S^{p,q}$ is the one-point compactification of $\R^{p,q}$.  These
are the bigraded $\Z/2$-equivariant spheres, and we write
$\pieq_{p,q}$ for the $\Z/2$-equivariant stable homotopy group
$[S^{p,q},S^{0,0}]$.  These groups were computed by Araki-Iriye in the
range $p\leq 13$, although the calculations for $p = 12$ and $p = 13$ 
were announced without proof \cite{AI} \cite{I}.

One way to understand the global structure
of $\pieq_{*,*}$ is to break the calculation into pieces as follows.
The \mdfn{$n$th $\Z/2$-equivariant Milnor-Witt stem} is 
the collection of groups
\[ \oplus_{p} \pieq_{p+n,p}. \]
The $0$th
Milnor-Witt stem is a subring of $\pieq_{*,*}$, and the $n$th
Milnor-Witt stem is a module over this subring.  

Table \ref{tab:pieq} at the end of the article gives some partial
results about $\Z/2$-equivariant stable homotopy groups, arranged so that
the groups in each row belong to a common Milnor-Witt stem.

We will give a global picture of the current knowledge of
$\Z/2$-equivariant stable homotopy groups.
One piece of the global structure relates to the fixed-point map
\[ \phi\colon \pieq_{s,w} \ra \pi_{s-w} \]
from the equivariant to the non-equivariant groups.
This map is known
to be split for $s \geq 2w$ \cite{Bredon67}*{p.\ 284}, 
and is an isomorphism for $s<0$ 
\cite{AI}*{Proposition 7.0}.  
These splittings are represented by copies of
$\pi_{s-w}$ in Table \ref{tab:pieq}.

The second piece of global structure consists of periodicity,
for each fixed $s$, of the kernel of  $\phi\colon \pi_{s,*}\ra
\pi_{s-*}$.  Note that when $*>s$ this is $\pi_{s,*}$ itself, whereas
when $s\geq 2w$ it is a summand (by the preceding
paragraph).  There are two difficulties with the periodicity phenomenon.
First, the orders of the periodicities and the values of the 
periodic groups
are rather complicated.  See Table 2 of \cite{I} for
a complete description in the range $s \leq 13$, and
beware that the indexing in that table differs from ours:
the correspondence is given by the equations $s = p + q$ and $w = p$.
Second, there are exceptions to the periodicity in the 
range $2w \geq s \geq w-1$
\cite{AI}*{Proposition 4.8}.  These exceptions are shown in red
in Table \ref{tab:pieq}.  Note, however, that {\it some\/} of the groups in the 
range $2w \geq s \geq w-1$ actually do assume the periodic values.

The groups $\pieq_{p,0}$ and $\pieq_{p,1}$ are also computed in
\cite{Szymik} for $p\leq 13$ using the equivariant Adams spectral
sequence based on Borel cohomology.

\subsection{Motivic homotopy groups}
The motivic setup \cite{MV99} is similar to the equivariant setup.
Now $S^{1,0}$ is the simplicial circle,
$S^{1,1}$ is the scheme $\A^1 - 0$, and
$S^{p,q}$ is the appropriate smash product of copies of
$S^{1,0}$ and $S^{1,1}$.
We use the same notation $S^{p,q}$ for motivic spheres and equivariant
spheres.  
Equivariant realization sends one to the other,
so this abuse of notation generally does not lead to confusion. 

We write $\pimot_{p,q}$ for the $\R$-motivic stable homotopy group
$[S^{p,q}, S^{0,0}]$.
The \mdfn{$n$th $\R$-motivic Milnor-Witt stem} is the collection of 
groups
\[ \oplus_{p} \pimot_{p+n,p}. \]
As in the equivariant case,
the $0$th Milnor-Witt stem is a subring, and 
the $n$th Milnor-Witt stem is a module over the $0$th Milnor-Witt stem.
Morel's connectivity theorem \cite{Morel05}
shows that the negative Milnor-Witt stems are zero.
Moreover, Morel has calculated the $0$th Milnor-Witt stem
in terms of Milnor-Witt $K$-theory \cite{Morel04}*{Section 6}.

Morel's calculation gives an explicit description
of $\pimot_{-1,-1}$, 
but it turns out to be a complicated
uncountable group.  In order to carry out further calculations,
we find it convenient to work with the stable homotopy groups
of the 2-completed $\R$-motivic sphere.  One could complete at odd primes as 
well, but we do not address that here.

We will now set aside the $\R$-motivic stable homotopy ring $\pimot_{*,*}$,
and instead work with the stable homotopy ring $\hpimot_{*,*}$
of the 2-completed $\R$-motivic sphere.  This ring splits into
Milnor-Witt stems as before.
The 2-complete negative Milnor-Witt stems are still zero,
and the 2-complete $0$th Milnor-Witt stem can be easily
described with generators and relations.
Moreover, the first, second, and third Milnor-Witt stems have been
completely described \cite{DI15}.  The authors have preliminary
data on the $n$th Milnor-Witt stems for $n \leq 15$;
these results will appear in a future article.

\subsection{The comparison}

The map (\ref{eq:compare}) is not an
isomorphism in general.
We know that the negative $\R$-motivic Milnor-Witt stems vanish,
whereas Table \ref{tab:pieq} shows that in the $\Z/2$-equivariant context the
negative
Milnor-Witt stems are non-trivial.
In the $0$th Milnor-Witt stems,
the map (\ref{eq:compare}) is an isomorphism when
$p\leq 4$ but not in general \cite{AI}*{Theorem 12.4(iii)}.  
Likewise, the computations of \cite{DI15} show that
$\hpimot_{*,*}$ vanishes in the first Milnor-Witt stem for weights
larger than $2$, whereas the $\Z/2$-equivariant analog of this is false.

Nevertheless, we find
that the map (\ref{eq:compare}) 
is an isomorphism 
in a certain range.
The following is the main result of the paper.

\begin{thm}
\label{th:main}
The realization map $\hpimot_{s,w}\ra \hpieq_{s,w}$ is
an isomorphism in the range $s\geq 3w-5$ or $s \leq -1$.
\end{thm}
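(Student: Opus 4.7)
The natural strategy is to compare the $\R$-motivic and $\Z/2$-equivariant Adams spectral sequences, whose abutments are the two sides of (\ref{eq:compare}). Equivariant realization sends the pair $(\Amot,\Mmot)$ to $(\Aeq,\Meq)$, yielding a map of $E_2$-pages
\[
\Extmot(\Mmot,\Mmot) \ra \Exteq(\Meq,\Meq)
\]
compatible with the realization map on abutments. The plan is to show this map is an isomorphism in the specified range, then to transport the conclusion to the $E_\infty$-pages.

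For $s \geq 3w-5$, the key input is the structural difference between $\Mmot$ and $\Meq$: the equivariant cohomology contains a ``negative cone'' of extra classes $N$ sitting entirely in weights $w \leq -1$. Recording this as a short exact sequence
\[
0 \ra \Mmot \ra \Meq \ra N \ra 0
\]
of $\Aeq$-comodules and applying a change-of-rings identification, the discrepancy between $\Extmot(\Mmot,\Mmot)$ and $\Exteq(\Meq,\Meq)$ can be expressed in terms of $\Ext$-contributions supported on $N$. A careful tracking of tri-degrees --- likely organised by a $\rho$-Bockstein or weight-filtration spectral sequence --- should show that every class arising in this way lies strictly below the line $s = 3w-5$. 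This support analysis is the heart of the proof; the specific slope $3$ and constant $-5$ are forced by the geometry of the negative cone.

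For $s \leq -1$ the argument is different: one combines Morel's connectivity theorem, which gives $\hpimot_{s,w} = 0$ whenever $s < w$, with the Araki-Iriye identification $\hpieq_{s,w} \cong \hpi_{s-w}$ for $s < 0$. In the overlap region $w \leq s \leq -1$, where $w$ is also negative, the realization map admits a compatible splitting via the fixed-point construction, and the isomorphism can be verified directly on classical stable stems. In both ranges the $E_2$ isomorphism upgrades to an $E_\infty$ isomorphism because Adams differentials preserve $(s,w)$ up to filtration shift, so the source and target of any differential with one endpoint in the isomorphism region must lie in that region (or vanish).

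The main obstacle I anticipate is the sharpness of the $E_2$-comparison: uniform vanishing lines for $\Ext$ are standard, but showing that the discrepancy $\Extmot \ra \Exteq$ disappears precisely above $s = 3w-5$ requires a tight bound on the $\Ext$ contribution of $N$. The constant $-5$ in particular will depend on delicate features of how $N$ sits inside $\Meq$ as an $\Aeq$-comodule, and getting the slope exact rather than sub-optimal is the main technical hurdle.
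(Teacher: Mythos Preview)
Your overall strategy---compare the two Adams spectral sequences via realization---matches the paper's, and for $s\geq 3w-5$ your proposed route (short exact sequence $0\to\Mmot\to\Meq\to N\to 0$, change-of-rings, bound the $\Ext$-contribution of $N$) is a reasonable alternative to what the paper actually does. The paper avoids change-of-rings entirely: it observes that $C_{\Z/2}^* \cong \Meq\otimes_{\Mmot} C_\R^*$, so the cokernel of $C_\R^f\to C_{\Z/2}^f$ is spanned by $\frac{\theta}{\rho^k\tau^l}[\zeta_1|\cdots|\zeta_f]$, and then pins down the range by two elementary degree estimates (each monomial $\tau^\epsilon\xi^n$ has $t\leq 3w+1$; each $\frac{\theta}{\rho^k\tau^l}$ has $t\leq 3w-6$). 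This is more concrete and sidesteps the Hopf-algebroid bookkeeping your approach would require, but either route should yield the same bound.

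There is, however, a genuine gap in your treatment of $s\leq -1$. Morel's connectivity plus Araki--Iriye handles only the region $s<w$, where both sides vanish. In the remaining strip $w\leq s\leq -1$ you are in \emph{positive} Milnor--Witt degree, and you assert that $\hpimot_{s,w}$ can be identified with $\hpi_{s-w}$ via a fixed-point splitting. No such identification is available as input: that the composite $\hpimot_{s,w}\to\hpieq_{s,w}\to\hpi_{s-w}$ is an isomorphism for $s\leq -1$ is a \emph{consequence} of the theorem, not a tool for proving it. The paper instead continues the cobar comparison into this range, showing $C_\R^f\to C_{\Z/2}^f$ is an isomorphism whenever $t\leq f-1$ (since every $\zeta_i$ has $t_i\geq 1$), and then does extra work at the boundary $s=-1$ by checking that the cokernel classes $\frac{\theta}{\tau^a}[\tau_0|\cdots|\tau_0]$ at $s=0$ are permanent cycles.

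Finally, your claim that the $E_2$-isomorphism propagates to $E_\infty$ because differentials stay inside the region is not quite right: $d_r$ lowers $s$ by one, so differentials do cross the boundaries $s=3w-5$ and $s=0$. The paper handles this by proving an \emph{injection} at $s=3w-6$ (for the upper region) and the permanent-cycle observation at $s=0$ (for the lower region), then applying a small homological-algebra lemma inductively over the pages.
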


In Table \ref{tab:pieq} the range from the above theorem is shaded.
All of the groups in that region
coincide, up to 2-completion, with their 2-completed $\R$-motivic analogues.

\begin{ex}
\label{ex:sigma}
We computed in \cite{DI15} that
$\hpimot_{7,4}$ contains an element of order 32.
Theorem \ref{th:main} implies that
$\hpieq_{7,4}$ also contains an element of order 32.
This is somewhat surprising because the classical image of $J$ in
the 7-stem has order 16.
In fact, this phenomenon is already apparent in the results of 
Araki and Iriye \cite{AI}.
This observation calls strongly for a more careful study of the
motivic and equivariant images of $J$.
\end{ex}

We note two immediate consequences of Theorem~\ref{th:main}.  First,
consider the map $\hpimot_{s,w} \ra \hpi_{s-w}$ induced by taking
fixed points of equivariant realization.  Theorem \ref{th:main}
implies that this map is an isomorphism in the range $s \leq -1$ and a
split surjection for $s\geq \max\{3w-5,2w\}$, based on the analogous facts
 for $\phi\colon \pieq_{s,w} \ra
\pi_{s-w}$.  
Secondly, the known periodicity phenomena in the $\pieq_{s,*}$ groups
can now be transplanted into the $\R$-motivic context,
as in Corollary \ref{cor:period}.

\begin{cor}
\label{cor:period}
For fixed $s$ in the range $s\geq \max\{3w-5,2w\}$, 
the complementary summands of $\hpi_{s-w}$ in 
$\hpimot_{s,w}$ are periodic in $w$.
\end{cor}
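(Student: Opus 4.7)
The plan is to combine Theorem~\ref{th:main} with the Araki--Iriye periodicity results on the equivariant side, using the compatibility of the fixed-point splittings on the two sides. In the range $s\geq \max\{3w-5,2w\}$, Theorem~\ref{th:main} supplies a realization isomorphism $\hpimot_{s,w}\cong \hpieq_{s,w}$, while the splitting discussion just before the corollary gives a splitting of the motivic fixed-point map $\hpimot_{s,w}\to \hpi_{s-w}$.

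First I would check that these structures are compatible. The motivic fixed-point map factors as equivariant realization followed by the equivariant fixed-point map $\phi\colon \hpieq_{s,w}\to \hpi_{s-w}$, so under the isomorphism of Theorem~\ref{th:main}, the complementary summand of $\hpi_{s-w}$ in $\hpimot_{s,w}$ is identified with $\ker(\phi\colon \hpieq_{s,w}\to \hpi_{s-w})$. This identification is formal but should be made explicit.

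Next I would invoke the Araki--Iriye periodicity quoted in the introduction: for fixed $s$, the groups $\ker(\phi\colon \pieq_{s,w}\to \pi_{s-w})$ are periodic in $w$, and the only exceptional values lie in the band $2w\geq s\geq w-1$. Our constraint $s\geq 2w$ keeps us above (or at the boundary of) this exceptional band, so in the range of the corollary these kernels are periodic. Since each $\pieq_{s,w}$ is finitely generated, the periodicity passes through $2$-completion to $\ker(\phi\colon \hpieq_{s,w}\to \hpi_{s-w})$. To transport the periodicity back motivically, I note that decreasing $w$ (which is the direction in which periodicity acts on a fixed $s$) only makes the hypothesis $s\geq \max\{3w-5,2w\}$ easier to satisfy, so Theorem~\ref{th:main} applies simultaneously to $(s,w)$ and to all its periodic shifts.

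The main (and rather minor) obstacle is keeping careful track of the boundary case $s=2w$ and verifying that the splittings on the two sides really do correspond under equivariant realization; once these compatibilities are in hand, the statement follows directly from the two cited inputs.
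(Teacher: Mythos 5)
Your proposal is correct and follows exactly the route the paper intends: the paper states the corollary as an immediate consequence of Theorem~\ref{th:main} together with the Araki--Iriye periodicity, offering no further argument, and your write-up simply makes explicit the compatibility of the fixed-point splittings, the passage through $2$-completion, and the check that lowering $w$ preserves the range. The one point you flag as needing care---the boundary $s=2w$ brushing against the exceptional band---is also consistent with the paper's range: for $w\geq 5$ the binding constraint becomes $s\geq 3w-5$, which is strictly above $2w$, and for $w\leq 5$ the visible data (Table~\ref{tab:pieq}) shows no exceptions along $s=2w$, so your caution is warranted but the claim survives it.
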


We do not give the periods in Corollary \ref{cor:period},
but specific formulas for these are known from the
equivariant context.

Corollary \ref{cor:period} describes a qualitative property of
$\R$-motivic stable homotopy groups that deserves further study and 
is related to $\tau^{2^n}$-periodic families
in the $\R$-motivic Adams spectral sequence (see \cite{DI15} for an
introduction to this basic phenomenon).
We expect to return to the topic of motivic periodicity in future work.

The proof of Theorem~\ref{th:main} is straightforward.  
Equivariant realization induces a map
from the 
$\R$-motivic Adams spectral sequence to the
$\Z/2$-equivariant Adams spectral sequence.
The $\R$-motivic and $\Z/2$-equivariant Steenrod algebras agree in a range
of dimensions.  This gives an isomorphism on cobar complexes in a range,
which shows that $\R$-motivic and $\Z/2$-equivariant $\Ext$ groups agree
in a range.  In other words, the $\Z/2$-equivariant and $\R$-motivic Adams
$E_2$-pages agree in a range.
Finally, this induces an isomorphism in homotopy groups in a range.
The only complications arise as matters of bookkeeping.

\subsection{Notation}

For the reader's convenience, we record here notation used
in the article.
\begin{itemize}
\item
$\Mmot$ is the $\R$-motivic homology of a point with $\F_2$ coefficients.
\item
$\Meq$ is the $\Z/2$-equivariant homology of a point with $\F_2$ coefficients.
\item
$\Amot$ is the dual $\R$-motivic Steenrod algebra.  We grade elements in the form
$(t,w)$, where $t$ is the internal Steenrod degree and $w$ is the
motivic weight.
\item
$\Aeq$ is the dual $\Z/2$-equivariant Steenrod algebra.  We grade elements in the form
$(t,w)$, where $t$ is the internal Steenrod degree and $w$ is the
equivariant weight.
\item
$\augmot$ is the augmentation ideal of $\Amot$.
\item
$\augeq$ is the augmentation ideal of $\Aeq$.
\item
$C_\R^*$ is the $\R$-motivic cobar complex.
\item
$C_{\Z/2}^*$ is the $\Z/2$-equivariant cobar complex.
\item
$\Extmot = \Ext_{\Amot} ( \Mmot, \Mmot)$ is the cohomology of the $\R$-motivic 
Steenrod algebra.
We grade elements in the form
$(s,f,w)$, where $s = t - f$ is the stem, $f$ is the Adams filtration,
and $w$ is the motivic weight.
\item
$\Exteq = \Ext_{\Aeq} ( \Meq, \Meq)$ is the cohomology of the $\Z/2$-equivariant 
Steenrod algebra.
We grade elements in the form
$(s,f,w)$, where $s = t - f$ is the stem, $f$ is the Adams filtration,
and $w$ is the equivariant weight.
\item
$\hpimot_{*,*}$ is the stable homotopy ring of the 2-completed $\R$-motivic
sphere.  We grade elements in the form $(s,w)$, where $s$ is the stem
and $w$ is the motivic weight.
\item
$\hpieq_{*,*}$ is the stable homotopy ring of the 2-completed $\Z/2$-equivariant
sphere.  We grade elements in the form $(s,w)$, where $s$ is the stem
and $w$ is the equivariant weight.
\end{itemize}

For sake
of tradition, we refer to $\Amot$ and $\Aeq$ as Steenrod algebras.
More precisely, $(\Mmot, \Amot)$ and $(\Meq, \Aeq)$ are Hopf algebroids,
not Hopf algebras, because $\Mmot$ is a non-trivial $\Amot$-module,
and $\Meq$ is a non-trivial $\Aeq$-module.  

\section{The motivic and equivariant Steenrod algebras}

Let $H^{\R}$ denote the $\R$-motivic Eilenberg-MacLane spectrum representing
motivic cohomology with $\F_2$ coefficients, and let
$\Mmot =\pi_{*,*}(H^{\R})$
be the homology of a point.
Recall that
$\Mmot$ equals $\F_2[\tau,\rho]$ where $\tau$ has homological degree $(0,-1)$
and $\rho$ has homological degree $(-1,-1)$ \cite{Voevodsky03b}.

Let $\Amot = \pimot_{*,*}(H^{\R} \Smash H^{\R})$ be the dual $\R$-motivic
Steenrod algebra.
Recall that $\Amot$ is equal to 
\[ 
\M_2[\tau_0,\tau_1,\ldots,\xi_0,\xi_1,\ldots]/(\xi_0=1,\tau_k^2=\tau
\xi_{k+1}+\rho\tau_{k+1}+\rho\tau_0\xi_{k+1}),
\]
where $\xi_i$ has bidegree $(2(2^i-1),2^i-1)$ and $\tau_i$ has
bidegree $(2^{i+1}-1,2^i-1)$ \cite{Voevodsky10}.
For a summary of the complete Hopf algebroid structure,
see \cite{DI15}*{Section 2}.
Observe that $\Amot$ is free as a
left $\Mmot$-module, with basis given by monomials
$\tau_{0}^{\epsilon_0}\tau_1^{\epsilon_1}\ldots
\tau_r^{\epsilon_r}\xi_1^{n_1}\ldots \xi_s^{n_s}$ where 
$0 \leq \epsilon_i \leq 1$ and $n_i \geq 0$.
We abbreviate such a monomial as $\tau^\epsilon \xi^n$.

When we build the cobar complex, we will use the augmentation
ideal $\augmot$ of $\Amot$, i.e., the kernel of the augmentation map
$\Amot \map \Mmot$.
Observe that $\augmot$ is also free as a 
left $\Mmot$-module, with 
the same basis as for $\Amot$ except that the monomial $1$
is excluded.

Similarly, let $H^{\Z/2}$ denote the $\Z/2$-equivariant Eilenberg-MacLane spectrum
corresponding to the constant Mackey functor with value $\F_2$.  Write
$\Meq=\pi_{*,*}(H^{\Z/2})$ for the coefficient ring 
and $\Aeq=\pi_{*,*}(H^{\Z/2} \Smash H^{\Z/2})$ for the $\Z/2$-equivariant
dual Steenrod algebra. 

We will now recall an explicit description of $\Meq$ 
\cite{HK01}*{Proposition 6.2}.
It contains $\Mmot$ as a subring, but also contains a ``dual copy'' in
opposing dimensions. 
Figure \ref{fig:Meq} gives a complete description of
$\Meq$.
Every dot
denotes a copy of $\F_2$, vertical lines represent multiplication by
$\tau$, and diagonal lines represent multiplication by $\rho$.

\begin{figure}[h]
\label{fig:Meq}
\begin{pspicture}(-5,-2.25)(5,3.5)
\psset{unit=0.5}
\psline[linecolor=grey]{->}(-5,0)(5,0)
\psline[linecolor=grey]{->}(0,-4)(0,7)

\multido{\i=0+-1}{5}
{\pscircle*(0,\i){\cirradnew}}

\multido{\i=-1+-1}{4}
{\pscircle*(-1,\i){\cirradnew}}

\multido{\i=-2+-1}{3}
{\pscircle*(-2,\i){\cirradnew}}

\multido{\i=-3+-1}{2}
{\pscircle*(-3,\i){\cirradnew}}

\pscircle*(-4,-4){\cirradnew}

\psline{->}(0,0)(0,-4.5)
\psline{->}(-1,-1)(-1,-4.5)
\psline{->}(-2,-2)(-2,-4.5)
\psline{->}(-3,-3)(-3,-4.5)
\psline{->}(-4,-4)(-4,-4.5)
\psline{->}(0,0)(-4.5,-4.5)
\psline{->}(0,-1)(-3.5,-4.5)
\psline{->}(0,-2)(-2.5,-4.5)
\psline{->}(0,-3)(-1.5,-4.5)
\psline{->}(0,-4)(-0.5,-4.5)
\multido{\i=2+1}{5}
{\pscircle*(0,\i){\cirradnew}}
\multido{\i=3+1}{4}
{\pscircle*(1,\i){\cirradnew}}
\multido{\i=4+1}{3}
{\pscircle*(2,\i){\cirradnew}}
\multido{\i=5+1}{2}
{\pscircle*(3,\i){\cirradnew}}
\pscircle*(4,6){\cirradnew}

\psline{->}(0,2)(0,6.5)
\psline{->}(1,3)(1,6.5)
\psline{->}(2,4)(2,6.5)
\psline{->}(3,5)(3,6.5)
\psline{->}(4,6)(4,6.5)
\psline{->}(0,2)(4.5,6.5)
\psline{->}(0,3)(3.5,6.5)
\psline{->}(0,4)(2.5,6.5)
\psline{->}(0,5)(1.5,6.5)
\scriptsize
\put(0.2,0.2){$1$}
\put(0.2,-1){$\tau$}
\put(-1.6,-1){$\rho$}
\put(-0.5,1.8){$\theta$}
\put(4.5,0.2){$t$}
\put(-0.6,6.7){$w$}
\end{pspicture}
\caption{The equivariant coefficient ring $\Meq$ (homological grading)}
\end{figure}
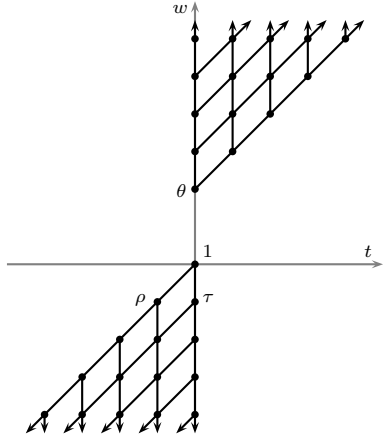

In words, $\Meq$ in bidegree $(t,w)$
consists of a copy of $\F_2$ when:
\begin{enumerate}
\item
$t \geq 0$ and $w \geq t + 2$, or
\item
$t \leq 0$ and $w \leq t$.
\end{enumerate}
The element in bidegree $(0,2)$ is called $\theta$, 
and the other elements
in the ``dual copy'' are typically named
$\frac{\theta}{\tau^k\rho^l}$ for $k \geq 0$ and $l\geq 0$.  
This naming convention
respects the product structure, although one must remember that
neither $\tau$ nor $\rho$ is actually invertible.  
Any two elements of the form
$\frac{\theta}{\tau^k\rho^l}$ multiply to zero.
These details about the product structure will not be needed in our
analysis.  

The dual $\Z/2$-equivariant 
Steenrod algebra $\Aeq$ has the same description as the
$\R$-motivic Steenrod algebra, but with $\Mmot$ replaced with $\Meq$
\cite{HK01}*{Theorem 6.41}.
In particular, note that $\Aeq$ is free as a left $\Meq$-module on
the same basis.
More explicitly,
$\Aeq$ is equal to $\Meq \otimes_{\Mmot} \Amot$.
The augmentation
ideal $\augeq$ of $\Aeq$
is also free as a 
left $\Meq$-module, with 
the same basis as for $\Aeq$ except that the monomial $1$
is excluded, so $\augeq$ is equal to
$\Meq \otimes_{\Mmot} \augmot$.

Equivariant realization from $\R$-motivic homotopy
theory to $\Z/2$-equivariant homotopy theory 
yields a map $(\Amot,\Mmot) \ra (\Aeq,\Meq)$ of Hopf algebroids.
This map is just the evident inclusion of $\Mmot$
into $\Meq$, and of $\Amot$ into $\Aeq$.  

\begin{lemma}
\label{lem:monomial-degree}
Let $\tau^\epsilon \xi^n$ have bidegree $(t,w)$.
Then $t \leq 3w + 1$.
\end{lemma}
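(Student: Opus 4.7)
The plan is to exploit additivity of bidegree under multiplication. For each generator $g$ appearing in the monomial I would compute the quantity $t(g) - 2 w(g)$. For $\xi_j$ ($j \geq 1$) the bidegree is $(2(2^j - 1),\, 2^j - 1)$, so $t - 2w = 0$. For $\tau_i$ ($i \geq 0$) the bidegree is $(2^{i+1} - 1,\, 2^i - 1)$, so $t - 2w = 1$. Summing these contributions over the factors of $\tau^\epsilon \xi^n$ yields the identity
\[
 t - 2w \;=\; N, \qquad \text{where } N = \textstyle\sum_i \epsilon_i
\]
is the number of $\tau$-factors. The desired inequality $t \leq 3w+1$ is therefore equivalent to the lower bound $w \geq N-1$.

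Next I would estimate $w$ from below. Since the $\xi$-factors contribute non-negatively to $w$, it suffices to bound the contribution of the $\tau$'s. Write the $\tau$-factors as $\tau_{i_1}, \tau_{i_2}, \ldots, \tau_{i_N}$ with $0 \leq i_1 < i_2 < \cdots < i_N$ (strict inequality since each $\epsilon_i \in \{0,1\}$). Because these indices are strictly increasing non-negative integers, $i_k \geq k-1$, so
\[
 w \;\geq\; \sum_{k=1}^{N} (2^{i_k} - 1) \;\geq\; \sum_{k=1}^{N} (2^{k-1} - 1) \;=\; 2^N - 1 - N.
\]

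Finally I would close the argument by verifying the elementary inequality $2^N - 1 - N \geq N - 1$, equivalently $2^N \geq 2N$, which holds for all $N \geq 1$ (trivially by induction). The case $N = 0$ is immediate since $w \geq 0 \geq N - 1$.

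The argument is essentially a direct bookkeeping computation; the only conceptual subtlety is recognizing that while $\tau_0$ is weight-free, its contribution to $N$ is absorbed by the additive constant on the right-hand side of $t \leq 3w+1$. The bound is sharp, realized by $\tau_0$ itself.
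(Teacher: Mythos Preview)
Your argument is correct, but it takes a more circuitous route than the paper's. The paper observes directly that each generator $\xi_i$ and each $\tau_i$ with $i \geq 1$ individually satisfies the inequality $t \leq 3w$; since both $t$ and $w$ are additive under multiplication, any product of such generators still satisfies $t \leq 3w$. The single exceptional generator $\tau_0$, with bidegree $(1,0)$, can appear at most once and contributes exactly the ``$+1$'' in the final bound. Your approach instead rewrites the inequality as $w \geq N-1$ and then proves this via a combinatorial lower bound on $w$ using the distinctness of the $\tau$-indices and the inequality $2^N \geq 2N$. This works, but it is doing more than necessary: you are in effect re-deriving, in aggregate, a bound that already holds generator by generator. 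The paper's decomposition isolates $\tau_0$ as the sole obstruction, which both shortens the argument and makes transparent why the bound is sharp at $\tau_0$.
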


\begin{proof}
The bidegree of each $\xi_i$ satisfies the inequality $t \leq 3w$.
Similarly, if $i \geq 1$, then the bidegree of $\tau_i$ 
also satisfies $t \leq 3w$.
Therefore the bidegree of $\tau^\epsilon \xi^n$
satisfies the inequality $t \leq 3w$ if $\epsilon_0 = 0$.

On the other hand, if $\epsilon_0 = 1$, then write
$\tau^\epsilon \xi^n$ as $\tau_0 \tau^{\epsilon'} \xi^n$,
where $\epsilon'_0 = 0$.
The bidegree of $\tau^{\epsilon'} \xi^n$ satisfies the inequality
$t \leq 3w$, so the bidegree of
$\tau_0 \tau^{\epsilon'} \xi^n$ satisfies $t \leq 3w + 1$.
\end{proof}

\begin{remark}
In fact, one can make a much stronger statement about the bidegrees of
the elements $\tau^\epsilon \xi^n$.  In general,
the bidegree of such an element satisfies the inequality
$t \geq 2^{c+1} - c - 2$, where $c = t - 2w$ is the ``Chow degree''.
However, this stronger inequality does not end up yielding a stronger result
about stable homotopy groups.  Likewise, the result in the following
lemma is non-optimal---but the slope of $3$ is chosen precisely because it
interacts well with the bound from the previous lemma.  
\end{remark}

\begin{lemma}
\label{lem:theta-degree}
Let $(t,w)$ be the bidegree of the element $\frac{\theta}{\rho^k
  \tau^l}$ 
in $\Meq$.  Then one has $t \leq 3w - 6$.
\end{lemma}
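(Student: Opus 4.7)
The plan is a direct bidegree bookkeeping argument using Figure \ref{fig:Meq}. The key observation is that although $\tau$ and $\rho$ are not literally invertible, the element $\frac{\theta}{\tau^k\rho^l}$ has a well-defined bidegree that is determined formally by the bidegrees of $\theta$, $\tau$, and $\rho$.

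First, I would record the relevant bidegrees. From the figure, $\theta$ lies in bidegree $(0,2)$. The element $\tau$ has bidegree $(0,-1)$ and $\rho$ has bidegree $(-1,-1)$. Since the naming convention $\frac{\theta}{\tau^k\rho^l}$ respects the product structure, the bidegree of this element must be such that multiplying by $\tau^k \rho^l$ returns one to the bidegree of $\theta$. Consequently $\frac{\theta}{\tau^k\rho^l}$ lies in bidegree
\[
(t,w) \;=\; \bigl(0 - k\cdot 0 - l\cdot(-1),\; 2 - k\cdot(-1) - l\cdot(-1)\bigr) \;=\; (l,\; 2+k+l).
\]
One may also read this off directly from Figure \ref{fig:Meq}: the ``dual copy'' consists of the dots with $t\geq 0$ and $w \geq t+2$, and starting from $\theta$ at $(0,2)$, each factor of $\frac{1}{\tau}$ moves up by $1$ in $w$, while each factor of $\frac{1}{\rho}$ moves up and to the right by $1$.

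With the bidegree in hand, the desired inequality is immediate:
\[
3w - t \;=\; 3(2+k+l) - l \;=\; 6 + 3k + 2l \;\geq\; 6,
\]
since $k,l \geq 0$. This gives $t \leq 3w - 6$, with equality exactly at $\theta$ itself. There is no real obstacle in this argument; the only thing one needs to be careful about is the sign conventions in the homological grading, which is why I would begin by explicitly recording the bidegrees of $\theta$, $\tau$, and $\rho$ before performing the arithmetic.
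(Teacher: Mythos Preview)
Your argument is correct and is essentially the same approach as the paper's: the paper simply observes from Figure~\ref{fig:Meq} that the elements $\frac{\theta}{\rho^k\tau^l}$ all lie on or above the line $t = 3w - 6$, whereas you carry out the underlying bidegree arithmetic explicitly to reach the same conclusion.
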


\begin{proof}
In Figure \ref{fig:Meq},
the elements of the form $\frac{\theta}{\rho^k \tau^l}$ 
all lie on or above the line $t = 3w - 6$.
\end{proof}

\section{Cobar complexes and $\Ext$ groups}

Next we proceed to the cobar complexes of $\Amot$ and $\Aeq$, respectively.
These cobar complexes are differential graded algebras whose homologies
give the $\R$-motivic and $\Z/2$-equivariant $\Ext$ groups.
We will obtain an isomorphism of $\Ext$ groups in a range by establishing
an isomorphism of cobar complexes in a range.

Let $C_\R^*$ and $C_{\Z/2}^*$ be the $\R$-motivic and $\Z/2$-equivariant
cobar complexes.  By definition, $C_\R^f$ is equal to
\[
\augmot \otimes_{\Mmot} \augmot \otimes_{\Mmot} \cdots \otimes_{\Mmot}
\augmot,
\]
where there are $f$ factors in the tensor product.  Similarly,
$C_{\Z/2}^f$ is equal to
\[
\augeq \otimes_{\Meq} \augeq \otimes_{\Meq} \cdots \otimes_{\Meq}
\augeq.
\]

\begin{lemma}
\label{lem:cobar}
The $\Z/2$-equivariant cobar complex $C_{\Z/2}^*$ is isomorphic to
$\Meq \otimes_{\Mmot} C_{\R}^*$.
\end{lemma}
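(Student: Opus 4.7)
The plan is to prove this by a direct base-change calculation, using the identification $\augeq = \Meq \otimes_{\Mmot} \augmot$ that has already been established. The key input is that the $\Meq$-bimodule structure on $\augeq$ (left action via the left unit, right action via the right unit of the equivariant Hopf algebroid) is exactly the base change of the $\Mmot$-bimodule structure on $\augmot$; this is what the formula $\Aeq = \Meq \otimes_{\Mmot} \Amot$ of Hopf algebroids is telling us. With that in hand, the lemma is a formal manipulation of tensor products.

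First I would set up notation carefully. The $\Z/2$-equivariant cobar complex in degree $f$ is
\[
C_{\Z/2}^f = \augeq \otimes_{\Meq} \augeq \otimes_{\Meq} \cdots \otimes_{\Meq} \augeq,
\]
where in each interior tensor $-\otimes_{\Meq}-$ one uses the right $\Meq$-action on the left factor and the left $\Meq$-action on the right factor. Both actions on each copy of $\augeq$ are base-changed from the corresponding $\Mmot$-actions on $\augmot$, so $\augeq \cong \Meq \otimes_{\Mmot} \augmot$ holds as $\Meq$-bimodules (with the right $\Meq$-action factoring through the right $\Mmot$-action on $\augmot$ after applying the appropriate unit).

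Next I would argue by induction on $f$. The base case $f=1$ is just the formula $\augeq = \Meq \otimes_{\Mmot} \augmot$ that has already been noted. For the inductive step, assuming $C_{\Z/2}^{f-1} \cong \Meq \otimes_{\Mmot} C_{\R}^{f-1}$, I would compute
\begin{align*}
C_{\Z/2}^{f} &= \augeq \otimes_{\Meq} C_{\Z/2}^{f-1}
  \cong \augeq \otimes_{\Meq} \bigl(\Meq \otimes_{\Mmot} C_{\R}^{f-1}\bigr) \\
&\cong \augeq \otimes_{\Mmot} C_{\R}^{f-1}
  \cong \bigl(\Meq \otimes_{\Mmot} \augmot\bigr) \otimes_{\Mmot} C_{\R}^{f-1} \\
&\cong \Meq \otimes_{\Mmot} \bigl(\augmot \otimes_{\Mmot} C_{\R}^{f-1}\bigr)
  = \Meq \otimes_{\Mmot} C_{\R}^{f}.
\end{align*}
Each step is unitality or associativity of tensor product.

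The only real point to check is that the isomorphisms above are compatible with the chosen bimodule structures, i.e., that collapsing $\augeq \otimes_{\Meq} \Meq$ to $\augeq$ uses the right $\Meq$-module structure coming from $\Meq \otimes_{\Mmot} \augmot$ in the way the equivariant cobar construction expects. This is the main bookkeeping obstacle, and it reduces to the statement that the right unit $\Meq \to \Aeq$ for the equivariant Hopf algebroid is the base change along $\Mmot \to \Meq$ of the right unit $\Mmot \to \Amot$, which follows from the fact that equivariant realization is a map of Hopf algebroids and from the explicit description of $\Aeq$ recalled in the previous section. Finally, since the cobar differentials on both sides are built from the same coproduct structure maps of the Steenrod algebras—and the $\Z/2$-equivariant coproduct is the $\Meq$-linear extension of the $\R$-motivic one—the isomorphism of graded modules constructed above automatically commutes with the differentials, giving the desired isomorphism of cobar complexes.
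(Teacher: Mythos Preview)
Your proposal is correct and follows the same approach as the paper: both use the identification $\augeq = \Meq \otimes_{\Mmot} \augmot$ to rewrite the iterated tensor product defining $C_{\Z/2}^f$ as $\Meq \otimes_{\Mmot} C_\R^f$. The paper compresses this into a single sentence, whereas you spell out the induction on $f$, the bimodule compatibility, and the compatibility with differentials---all reasonable details to include, but the underlying idea is identical.
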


\begin{proof}
Use that $\augeq$ is equal to $\Meq \otimes_{\Mmot} \augmot$.  Then
$C_{\Z/2}^f$ can be rewritten as $\Meq \otimes_{\Mmot} C_{\R}^f$.
\end{proof}

\begin{lemma}
\label{lem:cobar-iso}
The map $C_\R^f \map C_{\Z/2}^f$
is:
\begin{itemize}
\item
an injection in all degrees.
\item
an isomorphism in degrees satisfying $t - f \geq 3w - 5$.
\item
an isomorphism in degrees satisfying $t \leq f - 1$.
\end{itemize}
\end{lemma}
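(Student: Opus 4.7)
The plan is to analyze the cokernel of $C_\R^f \to C_{\Z/2}^f$ bidegree-by-bidegree, using the explicit $\Mmot$-bases of the two cobar complexes. By Lemma \ref{lem:cobar} we can identify $C_{\Z/2}^f$ with $\Meq \otimes_{\Mmot} C_\R^f$, and the comparison map becomes $\iota \otimes \id$, where $\iota\colon \Mmot \hookrightarrow \Meq$ is the inclusion of the ``positive half'' visible in Figure \ref{fig:Meq}. Since $C_\R^f$ is free as a left $\Mmot$-module on the admissible monomials $\tau^{\epsilon_1}\xi^{n_1} \otimes \cdots \otimes \tau^{\epsilon_f}\xi^{n_f}$ (each factor nontrivial), injectivity is immediate: tensoring the injection $\iota$ with a free $\Mmot$-module preserves injectivity. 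The cokernel is
\[ (\Meq/\Mmot) \otimes_{\Mmot} C_\R^f, \]
and $\Meq/\Mmot$ has $\F_2$-basis $\bigl\{\theta/(\tau^k\rho^l) : k,l \geq 0\bigr\}$.

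To handle the two isomorphism ranges, I would bound the bidegree of a typical cokernel basis element $\frac{\theta}{\tau^k\rho^l} \cdot \bigl(\tau^{\epsilon_1}\xi^{n_1} \otimes \cdots \otimes \tau^{\epsilon_f}\xi^{n_f}\bigr)$ from both sides. Writing $(a,b)$ for the bidegree of the $\theta$-factor and $(t_i,w_i)$ for that of the $i$-th cobar factor, one sums the individual bounds. Lemma \ref{lem:theta-degree} gives $a \leq 3b - 6$, and Lemma \ref{lem:monomial-degree} gives $t_i \leq 3w_i + 1$ for each $i$. Adding these yields
\[ t \;=\; a + \sum_i t_i \;\leq\; (3b-6) + \sum_i (3w_i + 1) \;=\; 3w - 6 + f, \]
so $t - f \leq 3w - 6$. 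Consequently, whenever $t - f \geq 3w - 5$ the cokernel vanishes in bidegree $(t,w)$, which is the second claim.

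For the third claim, I would use the dual observations: $a \geq 0$ since every element $\theta/(\tau^k\rho^l)$ sits in non-negative $t$-degree (visible in Figure \ref{fig:Meq}), and every monomial $\tau^{\epsilon}\xi^n \neq 1$ satisfies $t \geq 1$ (the smallest bidegree of a generator is $(1,0)$ for $\tau_0$, and $(2,1)$ for $\xi_1$, with all higher $\tau_i,\xi_i$ having still larger $t$). Summing over the $f$ tensor factors gives $\sum_i t_i \geq f$, hence $t \geq f$ for every cokernel element, and the cokernel is zero whenever $t \leq f-1$.

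This is essentially a bookkeeping argument with no real obstacle; the only step requiring a brief sanity check is the lower bound $t_i \geq 1$ on each cobar factor, which uses only that the monomial $1$ has been excluded from $\augmot$ when passing from $\Amot$ to its augmentation ideal. The slope-$3$ bounds in Lemmas \ref{lem:monomial-degree} and \ref{lem:theta-degree} are designed to line up exactly, which is what makes the range $t - f \geq 3w - 5$ natural.
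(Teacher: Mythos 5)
Your proposal is correct and matches the paper's proof essentially step for step: injectivity via freeness of $C_\R^f$ over $\Mmot$, identification of the cokernel with $(\Meq/\Mmot)\otimes_{\Mmot}C_\R^f$, and the two bidegree bounds obtained by summing the slope-$3$ inequality from Lemma~\ref{lem:monomial-degree} and the $\theta$-bound from Lemma~\ref{lem:theta-degree} (resp.\ the lower bounds $t_i\geq 1$ and $a\geq 0$). The only cosmetic difference is that you state the bound $a\geq 0$ on the $\theta$-factor explicitly, which the paper leaves implicit.
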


\begin{proof}
By Lemma \ref{lem:cobar}, 
we are considering the obvious map $C_\R^f \map \Meq \otimes_{\Mmot} C_\R^f$.
Since the map $\Mmot \map \Meq$ is injective and $C_\R^f$ is free over
$\M_2^\R$, it follows that 
the map 
$C_\R^f \map \Meq \otimes_{\Mmot} C_\R^f$
is injective for every $f \geq 0$.

Consider a typical element
$\frac{\theta}{\rho^k \tau^l} [ \zeta_1 | \cdots | \zeta_f ]$
of the cokernel of the map.
By Lemma \ref{lem:monomial-degree},
each $\zeta_i$ has bidegree $(t_i, w_i)$
satisfying $t_i \leq 3 w_i + 1$.
Summing over $i$, we obtain that
$[ \zeta_1 | \cdots | \zeta_f ]$ has bidegree satisfying
$t \leq 3 w + f$.
Finally, 
Lemma \ref{lem:theta-degree}
implies that the bidegree of
$\frac{\theta}{\rho^k \tau^l} [ \zeta_1 | \cdots | \zeta_f ]$
satisfies $t \leq 3w + f - 6$.
Therefore, the cokernel vanishes in bidegrees satisfying
$t - f \geq 3w - 5$.

Similarly,
each $\zeta_i$ has bidegree $(t_i, w_i)$
satisfying $t_i \geq 1$, so
$[ \zeta_1 | \cdots | \zeta_f ]$ has bidegree satisfying
$t \geq f$.
Then the bidegree of
$\frac{\theta}{\rho^k \tau^l} [ \zeta_1 | \cdots | \zeta_f ]$
also satisfies $t \geq f$.
Therefore, the cokernel vanishes in bidegrees satisfying
$t \leq f -1$.
\end{proof}

\begin{remark}
\label{rem:cobar-sharp}
The inequalities in Lemma \ref{lem:cobar-iso} are sharp in the following 
sense.
The element
$\theta [ \tau_0 \tau_1 | \tau_0 \tau_1 | \cdots | \tau_0 \tau_1 ]$
of $C_{\Z/2}^f$ lies on the line $t - f = 3w - 6$, and it 
does not belong to the image of $C_\R^f$.
Also, the element
$\theta [ \tau_0 | \tau_0 | \cdots | \tau_0 ]$ of 
$C_{\Z/2}^f$ lies on the line $t = f$, and it
does not belong to the image of $C_\R^f$.
\end{remark}

The following lemma from homological algebra
will let us deduce an $\Ext$ isomorphism
from the cobar isomorphism of Lemma \ref{lem:cobar-iso}.
The two parts are dual, and the proofs are simple diagram chases.

\begin{lemma}
\label{le:chain}
Let $C_*\ra D_*$ be a map of homologically graded chain complexes
(so the differentials decrease degree).
\begin{enumerate}[(a)]
\item
Suppose that $C_i\ra D_i$ is an
isomorphism for all $i\geq n+1$, and an injection for $i=n$.  Then
the map $H_i(C) \ra H_i(D)$ of homology groups is:
\begin{itemize}
\item
an injection for $i=n$,
\item
an isomorphism for $i\geq n+1$.
\end{itemize}
\item Dually,
suppose that $C_i\ra D_i$ is an
isomorphism for all $i\leq n-1$, and a surjection for $i=n$.  Then
the map $H_i(C) \ra H_i(D)$ of homology groups 
\begin{itemize}
\item
is an isomorphism for $i\leq n-1$,
\item
a surjection for $i=n$.
\end{itemize}
\end{enumerate}
\end{lemma}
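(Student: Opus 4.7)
The plan is to prove part (a) by three short diagram chases, and then obtain part (b) by the evident duality (reverse the chain direction, interchanging ``injection'' and ``surjection''). Since the hypotheses in (a) only involve degrees $\geq n$, each chase will use the behavior of $f\colon C_*\to D_*$ at three consecutive degrees: $i-1$, $i$, and $i+1$. The main bookkeeping task will be to verify that all three indices fall inside the range where the hypothesis applies.

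For injectivity of $H_n(C)\to H_n(D)$, I would take a cycle $c\in C_n$ whose image $f(c)$ is a boundary $\partial d'$ with $d'\in D_{n+1}$. Since $f$ is an isomorphism at degree $n+1$, I can lift $d'=f(c')$ uniquely, and then $f(c-\partial c')=0$ in $D_n$; injectivity of $f$ at degree $n$ forces $c=\partial c'$, so $[c]=0$ in $H_n(C)$. The same argument, using the iso at degree $i+1$ and (the injectivity part of the) iso at degree $i$, handles injectivity of $H_i(f)$ for $i\geq n+1$.

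For surjectivity of $H_i(f)$ with $i\geq n+1$, I would take a cycle $d\in D_i$, use the iso at degree $i$ to produce $c\in C_i$ with $f(c)=d$, and then use $f(\partial c)=\partial d=0$ together with injectivity of $f$ at degree $i-1$ (which holds because $i-1\geq n$) to conclude $\partial c=0$. Thus $c$ is a cycle in $C_i$ with $[c]\mapsto [d]$, proving surjectivity.

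Part (b) follows by running formally dual chases: surjectivity of $f$ at degree $n$ lets one lift a cycle $d\in D_n$ to some $c\in C_n$, and iso (hence injectivity) at degree $n-1$ lets one conclude $\partial c=0$, giving surjectivity of $H_n(f)$; the other two claims are handled by isomorphism of $f$ at degrees $i$ and $i\pm 1$. I do not anticipate any serious obstacle; the only thing to watch is that in each chase all three relevant indices lie in the hypothesized range, which is precisely what the inequalities $i\geq n+1$ (in (a)) and $i\leq n-1$ (in (b)) are designed to guarantee.
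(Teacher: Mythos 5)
Your proof is correct and takes essentially the same approach as the paper, which simply remarks that the two parts are dual to one another and that the proofs are routine diagram chases; you have supplied exactly those chases, with the index bookkeeping done carefully.
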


We will use the grading $(s,f,w)$ for $\Ext$ groups, where
$s$ is the stem, $f$ is the Adams filtration, and $w$ is the weight.
An element of degree $(s,f,w)$ occurs at Cartesian coordinates $(s,f)$
in a standard Adams chart.  Recall that $s = t - f$, where $t$
is the internal Steenrod degree.

\begin{prop}
\label{prop:Ext-iso}
In degree $(s,f,w)$, the map $\Extmot \ra \Exteq$ is:
\begin{itemize}
\item
an injection if $s=3w-6$.  
\item
an isomorphism if $s \geq 3w-5$.
\end{itemize}
\end{prop}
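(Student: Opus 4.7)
The plan is to deduce the $\Ext$-level comparison from Lemma \ref{lem:cobar-iso} by applying the chain-complex machinery of Lemma \ref{le:chain}.  First I would observe that, in either the $\R$-motivic or $\Z/2$-equivariant cobar complex, the differential $d\colon C^f\to C^{f+1}$ preserves the internal bidegree $(t,w)$.  Hence, upon fixing $(t,w)$, each of $C_\R^\bullet$ and $C_{\Z/2}^\bullet$ becomes a cochain complex indexed by the Adams filtration $f$, and the realization map $C_\R^\bullet\to C_{\Z/2}^\bullet$ is a cochain map between these.  By definition, $\Extmot^{s,f,w}$ is the $f$th cohomology of the first complex at fixed internal bidegree $(s+f,w)$, and similarly for $\Exteq$.

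Next, by Lemma \ref{lem:cobar-iso}, at each fixed $(t,w)$ this cochain map is always injective and is an isomorphism whenever $t-f \geq 3w-5$, equivalently whenever $f \leq t-3w+5$.  At this point I would invoke the cohomological analogue of Lemma \ref{le:chain}(b): if a cochain map $\phi\colon C^\bullet\to D^\bullet$ is an isomorphism in degrees $\leq n$ and is injective in every degree, then $H^f(\phi)$ is an isomorphism for $f\leq n$ and is injective for $f=n+1$.  The diagram chase is routine: for $f\leq n$, injectivity in degree $f+1$ promotes any lift of a cycle to a cycle, while surjectivity in degree $f-1$ promotes a coboundary-witness; for $f=n+1$, one still has surjectivity in degree $f-1=n$ to lift the coboundary-witness and injectivity in degree $f$ to conclude injectivity on cohomology.

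Applying this criterion with $n=t-3w+5$ immediately yields that $\Extmot^{s,f,w}\to \Exteq^{s,f,w}$ is an isomorphism whenever $f\leq t-3w+5$, i.e.\ $s\geq 3w-5$, and is injective whenever $f=t-3w+6$, i.e.\ $s=3w-6$.  This is exactly the claim of the proposition.

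The main obstacle is essentially bookkeeping: checking that the cobar differential really preserves the internal bidegree $(t,w)$ so that fixing $(t,w)$ produces a well-defined cochain complex, and translating cleanly between the three coordinates $t$, $f$, and $s=t-f$ used in the proposition and in the lemma.  Once those are in place, no new calculation is required beyond what is already encoded in Lemmas \ref{lem:cobar-iso} and \ref{le:chain}.
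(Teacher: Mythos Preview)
Your proposal is correct and essentially identical to the paper's (one-line) proof: pass from the cobar isomorphism of Lemma~\ref{lem:cobar-iso} to $\Ext$ via Lemma~\ref{le:chain}, using that the cobar differential preserves $(t,w)$. One small correction: the variant of Lemma~\ref{le:chain} you spell out and verify---isomorphism in a range together with injectivity at the boundary---is the cohomological form of part~(a), not part~(b) (which has a surjectivity hypothesis); the statement you actually use is correct regardless.
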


\begin{proof}
The claims follow immediately from Lemmas \ref{lem:cobar-iso}
and \ref{le:chain} because $\Ext$ can be computed as 
the homology of the cobar construction.
\end{proof}

\begin{prop}
\label{prop:Ext-iso2}
In degree $(s,f,w)$, the map $\Extmot \ra \Exteq$ is
an isomorphism if $s \leq -1$.
\end{prop}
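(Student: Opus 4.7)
I would apply Lemma~\ref{le:chain} to the cobar complex $C_\R^{\bullet, t, w}$ at fixed internal degree $t = s + f$ and weight $w$; recall that $\Extmot^{f, s, w}$ is computed as the cohomology in degree $f$. For $s \leq -1$, the second clause of Lemma~\ref{lem:cobar-iso} (the range $t \leq g - 1$) directly gives that the comparison $C_\R^{g, t, w} \to C_{\Z/2}^{g, t, w}$ is an isomorphism at the positions $g = f$ and $g = f+1$, because there $t = s + f \leq f - 1 \leq g - 1$. The only potential problem is the position $g = f - 1$.

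At $g = f - 1$, the condition $t \leq g - 1$ specialises to $s \leq -2$, while the alternative condition $t - g \geq 3w - 5$ specialises to $s + 1 \geq 3w - 5$, which for $s = -1$ holds exactly when $w \leq 1$. So in the sub-cases ``$s \leq -2$'' and ``$s = -1$ with $w \leq 1$'', the cobar comparison is an iso at all three positions $g = f-1, f, f+1$, and Lemma~\ref{le:chain} yields the desired iso on $H^f$.

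The remaining case is $s = -1$ with $w \geq 2$, which I would handle by showing outright that $C_\R^{f, f-1, w} = 0 = C_{\Z/2}^{f, f-1, w}$, making the iso trivial. A hypothetical non-zero element would be $\mu \cdot [\zeta_1 | \cdots | \zeta_f]$ with $\mu$ a scalar in $\Mmot$ or $\Meq$ and $[\zeta_1 | \cdots | \zeta_f]$ a basis monomial of $C_\R^f$ of bidegree $(t', w')$. A short bookkeeping argument gives the bound $w' \leq t' - f$ on cobar basis elements of length $f$: each factor satisfies $t_i - 2 w_i \geq 0$ with equality only for pure-$\xi$ monomials (which themselves force $t_i \geq 2$), so summing $f$ factors yields $t' - 2 w' \geq 2 f - t'$. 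For $\mu = \tau^a \rho^b \in \Mmot$ (so $t' = f - 1 + b$ and $w' = w + a + b$) this bound forces $w + a \leq -1$, impossible for $w \geq 2$ and $a \geq 0$. For $\mu = \theta / (\tau^k \rho^l)$ in the $\theta$-dual region of $\Meq$, the equation $t' = f - 1 - l$ combined with $t' \geq f$ forces $l \leq -1$, impossible for $l \geq 0$. Thus both cobar groups vanish and the iso is immediate.

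The main obstacle is precisely the $s = -1$, $w \geq 2$ case: Lemma~\ref{lem:cobar-iso} does not provide an iso at cobar position $g = f - 1$ there, so one cannot simply close the argument via Lemma~\ref{le:chain}. The bidegree vanishing above is the necessary workaround; it exploits both the combinatorial structure of the cobar basis and the fact that the ``new'' part $\Meq \setminus \Mmot$ lives entirely in bidegrees with $t \geq 0$, which is the wrong sign for producing elements at internal degree $t = f - 1$ at cobar position $f$.
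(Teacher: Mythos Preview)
Your argument is correct, but it handles the delicate $s=-1$ case by a genuinely different route than the paper does.

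For $s\leq -2$ (and for $s=-1$ with $w\leq 1$) you and the paper agree: the cobar comparison is an isomorphism at the three relevant positions $g=f-1,f,f+1$, and the conclusion follows. (A small remark: this three-term fact is not literally one of the half-line statements in Lemma~\ref{le:chain}, but it is an immediate diagram chase, so no harm done.)

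Where you diverge is at $s=-1$ with $w\geq 2$. The paper identifies the cokernel of $C_\R^{f-1}\to C_{\Z/2}^{f-1}$ at internal degree $t=f-1$ explicitly as the span of $\frac{\theta}{\tau^a}[\tau_0\,|\cdots|\,\tau_0]$ and observes that these elements are cobar cycles (because $\tau_0$ is primitive); a diagram chase then upgrades the surjection on $H^f$ to an isomorphism. You instead prove the stronger vanishing $C_\R^{f}=C_{\Z/2}^{f}=0$ at internal degree $f-1$ and weight $w\geq 2$, via the bound $w'\leq t'-f$ on cobar basis monomials together with the degree constraints on $\Mmot$ and on the $\theta$-part of $\Meq$; this makes $H^f=0$ on both sides.

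Both arguments are valid. Yours is more elementary---no cobar differentials are computed---and it establishes the sharper fact that $\Extmot^{(-1,f,w)}=\Exteq^{(-1,f,w)}=0$ for $w\geq 2$. The paper's argument is uniform in $w$ and, more importantly, it isolates the cokernel classes $\frac{\theta}{\tau^k}h_0^i$ in $\Exteq$ at $s=0$; that identification is reused verbatim in the proof of Theorem~\ref{thm:pi2}, where one needs to know these classes are permanent cycles in the Adams spectral sequence. So the paper's route is chosen with an eye toward the next step.
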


\begin{proof}
Lemmas \ref{lem:cobar-iso} and \ref{le:chain}
imply that the map is an isomorphism if $s \leq -2$
and is a surjection if $s= -1$.
In order to obtain the isomorphism for $s = -1$, we need to 
investigate the cobar complex a little further.

In degrees satisfying $s = 0$, i.e., $t = f$, 
the cokernel of the map
$C_\R^* \map C_{\Z/2}^*$ of cobar complexes
consists
elements of the form
$\frac{\theta}{\tau^a} [ \tau_0 | \tau_0 | \cdots | \tau_0 ]$.
All of these elements are cycles in the $\Z/2$-equivariant cobar complex.
A diagram chase now shows that
$\Extmot \ra \Exteq$ is an isomorphism if $s = -1$.
\end{proof}

The following finiteness condition for $\Extmot$ implies that
there are only finitely many Adams differentials in any given
degree.  We will need this fact in Section \ref{sctn:homotopy}
when we analyze the Adams spectral sequence.

\begin{lemma}
\label{lem:Ext-finite}
In each degree $(s,f,w)$, 
the group
$\Extmot^{(s,f,w)}$ is a finite-dimensional $\F_2$-vector space.
\end{lemma}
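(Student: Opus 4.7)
The plan is to reduce the claim to the finite-dimensionality of the cobar complex $C_\R^f$ in each bidegree, because $\Extmot^{(s,f,w)}$ is a subquotient of the $(t,w)$-piece of $C_\R^f$ (with $t = s+f$). The key bookkeeping device I would use is the coweight $c := t-w$. Evaluating it on generators: $\rho$ has $c = 0$, $\tau$ has $c = 1$, $\tau_j$ has $c = 2^j$, and $\xi_k$ has $c = 2^k - 1$. Two consequences follow. First, every non-trivial monomial $\tau^\epsilon \xi^n$ in $\augmot$ satisfies $c \geq 1$, since it contains at least one generator of strictly positive coweight. Second, for each fixed $d \geq 1$ only finitely many such monomials have $c = d$, because the generator coweights grow without bound, so only finitely many generators can appear and each only to bounded multiplicity.

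Now I would count basis elements of $C_\R^f$ in a fixed bidegree $(t,w)$. Such a basis element has the form $\tau^a \rho^b [\zeta_1 | \cdots | \zeta_f]$ with $a,b \geq 0$ and each $\zeta_i$ a non-trivial monomial in $\augmot$; once the tuple $(\zeta_1,\ldots,\zeta_f)$ is fixed, the pair $(a,b)$ is uniquely determined by the bidegree (explicitly, $b = \sum_i t(\zeta_i) - t$). The coweight identity
\[ a \;+\; \sum_{i=1}^{f} c(\zeta_i) \;=\; t - w, \]
combined with $a \geq 0$ and $c(\zeta_i) \geq 1$, forces $f \leq \sum_i c(\zeta_i) \leq t - w$. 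For each admissible total $D \in \{f, f+1, \ldots, t-w\}$ there are only finitely many compositions of $D$ into $f$ positive parts, and by the second observation above only finitely many monomials realize each prescribed part. Hence the $\F_2$-dimension of $C_\R^f$ in bidegree $(t,w)$ is finite, and the lemma follows by taking subquotients.

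I do not expect a serious obstacle. The substantive content is the positivity $c(\zeta) \geq 1$ on every element of $\augmot$---without this the bound $a \leq t-w-f$ collapses and the counting fails. Everything else is elementary combinatorics, and the argument is the natural motivic analogue of the classical fact that the mod 2 Steenrod algebra is connected and of finite type in each degree.
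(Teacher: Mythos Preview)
Your argument is correct. The coweight $c=t-w$ is additive, positive on every generator of $\augmot$, and there are only finitely many basis monomials of any fixed coweight; from this the finiteness of each graded piece of $C_\R^f$ follows exactly as you describe, and $\Extmot^{(s,f,w)}$ is a subquotient.

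The paper takes a different route: it invokes the $\rho$-Bockstein spectral sequence from \cite{DI15} to reduce to $\C$-motivic $\Ext$, then uses a vanishing plane there to bound the power of $\rho$ that can appear, and finally appeals to degreewise finiteness of $\C$-motivic $\Ext$ via the motivic May spectral sequence. Your approach is more elementary and entirely self-contained---it needs no auxiliary spectral sequences and no input from the $\C$-motivic side---and in fact it proves the stronger statement that the cobar complex itself is degreewise finite. The paper's approach, by contrast, reuses machinery already in play in the authors' program and makes explicit the specific bound $k \le s+f-2w$ on $\rho$-powers, which is the same information your identity $a=(t-w)-\sum_i c(\zeta_i)$ encodes. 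Either way the substantive point is connectivity of $\augmot$ in a grading orthogonal to $\rho$; you isolate this directly, while the paper routes it through the $\rho$-Bockstein filtration.
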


\begin{proof}
As described in \cite{DI15}*{Section 3}, there is a $\rho$-Bockstein
spectral sequence converging to $\Extmot$.  
It suffices to show that the $E_1$-page of this spectral sequence is
finite-dimensional over $\F_2$ in each tridegree.
In degree $(s,f,w)$,
this $E_1$-page consists of elements of the form $\rho^k x$,
where $k\geq 0$ and $x$ belongs to the $\C$-motivic $\Ext$ group in degree
$(s+k, f, w+k)$.

The $\C$-motivic $\Ext$ groups have a vanishing plane,
as described in \cite{DI15}*{Lemma 2.2}.
In this case, the vanishing plane implies that 
$k \leq s+f-2w$ if $x$ is non-zero.  Since $k$ is non-negative this means there
are only finitely-many values of $k$ that contribute to the $E_1$-page
of our spectral sequence in degree $(s,f,w)$.  

Finally, the $\C$-motivic $\Ext$ groups are degreewise finite-dimensional.
This follows from the fact that the $E_1$-page of the motivic May spectral
sequence is degreewise finite-dimensional.
\end{proof}

\section{Homotopy groups}
\label{sctn:homotopy}

We now come to our main results comparing $\R$-motivic and 
$\Z/2$-equivariant homotopy groups.

\begin{thm}
\label{thm:pi}
The map $\hpimot_{s,w} \ra \hpieq_{s,w}$ is:
\begin{itemize}
\item
an injection if $s=3w-6$.
\item
an isomorphism if $s\geq 3w-5$.
\end{itemize}
\end{thm}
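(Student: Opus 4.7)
The plan is to compare the $\R$-motivic and $\Z/2$-equivariant Adams spectral sequences through the map induced by equivariant realization, using Proposition~\ref{prop:Ext-iso} as input on the $E_2$-page and then propagating the comparison up through the differentials to homotopy groups.

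First I would observe that equivariant realization produces a morphism from the $\R$-motivic Adams spectral sequence (converging to $\hpimot_{*,*}$) to the $\Z/2$-equivariant Adams spectral sequence (converging to $\hpieq_{*,*}$), whose map on $E_2$-pages is exactly $\Extmot \to \Exteq$. By Proposition~\ref{prop:Ext-iso}, on $E_2$ this map is an isomorphism in tridegrees with $s \geq 3w-5$ and an injection in tridegrees with $s = 3w-6$.

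Next I would promote these properties to every $E_r$-page by induction on $r$. The Adams differential $d_r \colon E_r^{(s,f,w)} \to E_r^{(s-1, f+r, w)}$ preserves $w$ and decreases $s$ by one. For a spot with $s \geq 3w-5$, incoming differentials originate at $(s+1, f-r, w)$, still in the isomorphism range, while outgoing differentials land at $(s-1, f+r, w)$ with $s-1 \geq 3w-6$, in the injection range; a short five-lemma style diagram chase then shows that $\ker/\im$ at $(s,f,w)$ maps isomorphically. For a spot on the injection boundary $s = 3w-6$, incoming differentials come from $(3w-5, f-r, w)$ in the isomorphism range, and the same style of chase shows that injectivity is preserved on $E_{r+1}$. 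By Lemma~\ref{lem:Ext-finite} only finitely many nonzero differentials enter or leave a fixed tridegree, so the spectral sequence stabilizes there and the comparison passes to $E_\infty$. A standard convergence argument (iso/injection on associated graded together with the degreewise finiteness of the Adams filtration) then lifts the $E_\infty$ statement to the claimed statement for $\hpimot_{s,w} \to \hpieq_{s,w}$.

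The hardest part is really just the bookkeeping in the induction step: the isomorphism range and the injection boundary are positioned exactly one stem apart, which is precisely what is needed to absorb the one-stem drop of the Adams $d_r$ differential. This structural match is the reason Proposition~\ref{prop:Ext-iso} is phrased the way it is, and once the indexing is set up correctly the propagation of the comparison through the spectral sequence is routine.
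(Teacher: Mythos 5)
Your proposal is correct and follows essentially the same approach as the paper: start from Proposition~\ref{prop:Ext-iso} on $E_2$, propagate by induction through the $E_r$-pages using the one-stem drop of $d_r$, invoke Lemma~\ref{lem:Ext-finite} for degreewise stabilization at $E_\infty$, and pass to the filtered target. The ``five-lemma style diagram chase'' you describe is exactly what the paper packages as Lemma~\ref{le:chain}, applied to the $E_r$-page viewed as a chain complex (after fixing $w$ and the $d_r$-invariant $rs+f$) graded so that the differential lowers the stem by one.
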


\begin{proof}
Proposition \ref{prop:Ext-iso}
gives an isomorphism (in a range) between the $E_2$-pages of the
$\R$-motivic and $\Z/2$-equivariant Adams spectral sequences.
Inductively, 
Lemma \ref{le:chain} gives isomorphisms (in a range)
between the $E_r$-pages of the spectral sequences for all $r$.
The finiteness condition of Lemma \ref{lem:Ext-finite}
guarantees that for each degree $(s,f,w)$,
there exists an $r$ such that 
the $E_\infty$-page is isomorphic to the $E_r$-page.
Therefore, we obtain an isomorphism of $E_\infty$-pages
in a range.

The $E_\infty$-pages are associated graded objects of 
the stable homotopy groups.
This implies
that the stable homotopy groups are isomorphic as well.

The same style of argument applies to the claim about injections.
\end{proof}

\begin{thm}
\label{thm:pi2}
The map $\hpimot_{s,w} \ra \hpieq_{s,w}$ is
an isomorphism if $s \leq -1$.
\end{thm}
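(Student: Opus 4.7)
The plan is to follow the template of the proof of Theorem \ref{thm:pi}: start from the $\Ext$ isomorphism of Proposition \ref{prop:Ext-iso2} on $E_2$-pages, and propagate it through the $E_r$-pages of the Adams spectral sequence to $\hpi$, using Lemma \ref{le:chain} together with the finiteness of Lemma \ref{lem:Ext-finite}.

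Proposition \ref{prop:Ext-iso2} gives an iso on $E_2$-pages of the $\R$-motivic and $\Z/2$-equivariant Adams spectral sequences in the range $s \leq -1$. For each fixed weight $w$, the Adams differential $d_r$ decreases $s$ by $1$, so applying Lemma \ref{le:chain}(b) inductively with $n=-1$ yields an iso on $E_r$-pages at $s \leq -2$ and a surjection at $s = -1$ for all $r \geq 2$. Lemma \ref{lem:Ext-finite} then guarantees that in each tridegree the spectral sequence collapses at a finite page, giving iso on $E_\infty$ at $s \leq -2$ and surjection at $s = -1$. Completeness of the Adams filtration in the 2-completed setting lifts this to: $\hpimot_{s,w} \to \hpieq_{s,w}$ is an isomorphism for $s \leq -2$ and a surjection for $s = -1$.

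To upgrade the surjection at $s = -1$ to an isomorphism, we imitate the final step in the proof of Proposition \ref{prop:Ext-iso2}. The obstruction to iso at $s = -1$ on higher $E_r$-pages is that the source of incoming $d_r$-differentials lies at $s = 0$, where the equivariant $E_r$-page may contain classes outside the motivic image. The cokernel of $\Extmot \to \Exteq$ at stem $0$ is represented at the cobar level by cycles of the form $\frac{\theta}{\tau^l}[\tau_0|\cdots|\tau_0]$, which are cycles in the equivariant cobar complex. The key claim is that these cokernel classes (and their refinements on later $E_r$-pages) either are permanent cycles in the equivariant Adams spectral sequence, or at worst support $d_r$-differentials whose image lies in the motivic image at stem $-1$. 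Either outcome forces the image subgroups to match, so the iso propagates to $s = -1$ on $E_\infty$ and then on $\hpi$.

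The main obstacle is this last claim: controlling the Adams differentials on the specific cokernel representatives $\frac{\theta}{\tau^l}[\tau_0|\cdots|\tau_0]$. The most direct approach is to track these cobar cycles through the Adams resolution and exploit the vanishing of the reduced coproduct on $\tau_0$ in the Hopf algebroid $(\Meq,\Aeq)$, together with the structure of $\theta$ under the right unit, in order to conclude that the relevant $d_r$-images are trivial or lie in the motivic image at stem $-1$.
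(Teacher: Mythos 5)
Your overall strategy matches the paper's: use Proposition \ref{prop:Ext-iso2} plus Lemma \ref{le:chain} and Lemma \ref{lem:Ext-finite} to get iso at $s \leq -2$ and surjection at $s=-1$, then study the $s=0$ cokernel classes $\frac{\theta}{\tau^l}[\tau_0|\cdots|\tau_0]$ (i.e.\ $\frac{\theta}{\tau^l}h_0^i$ in $\Ext$) to close the gap at $s=-1$. You have correctly located the crux. However, there are two problems. First, you leave the decisive claim --- that these cokernel classes are permanent cycles in the $\Z/2$-equivariant Adams spectral sequence --- as an acknowledged ``obstacle,'' with only a vague suggestion about the reduced coproduct on $\tau_0$ and the right unit on $\theta$; so the argument is incomplete precisely at the point where the paper asserts (and relies on) this permanence.

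Second, your fallback alternative is not actually acceptable: you write that the cokernel classes ``at worst support $d_r$-differentials whose image lies in the motivic image at stem $-1$,'' and claim either outcome gives the isomorphism. This is backwards. If an equivariant-only class at $s=0$ supported a nontrivial $d_r$ landing on a class $y$ in the image of $\Extmot$ at $s=-1$, then $y$ would die in the equivariant $E_\infty$-page while its motivic preimage could survive (there being no corresponding source in $\Extmot$). That would \emph{destroy} injectivity of the map on $E_\infty$ and hence on $\hpi$ at $s=-1$. The only scenario compatible with an isomorphism is the one the paper states: the cokernel classes are permanent cycles, so that the Adams differentials from stem $0$ to stem $-1$ correspond bijectively on the two sides, after which a diagram chase gives the $E_\infty$ isomorphism at $s=-1$. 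Your proof would be complete if you replaced the ``either/or'' with a genuine argument that $\frac{\theta}{\tau^l}h_0^i$ are permanent cycles.
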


\begin{proof}
The argument from Theorem \ref{thm:pi}
implies that the map is an isomorphism for $s \leq -2$
and a surjection for $s = -1$.  In order to obtain the isomorphism
for $s = -1$, we have to investigate the Adams $E_2$-pages slightly further.

Recall from the proof of Proposition \ref{prop:Ext-iso2}
that in degrees satisfying $s = 0$, 
the cokernel of the map 
$C_\R^* \map C_{\Z/2}^*$ of cobar complexes
consists of elements of the
form $\frac{\theta}{\tau^k} [ \tau_0 | \tau_0 | \cdots | \tau_0 ]$.
Therefore, in degrees satisfying $s = 0$, the cokernel of the map
$\Extmot \map \Exteq$ consists of elements of the form
$\frac{\theta}{\tau^k} h_0^i$.  These elements are all
permanent cycles in the $\Z/2$-equivariant Adams spectral sequence.
In other words, there is a one-to-one correspondence between
$\R$-motivic and $\Z/2$-equivariant Adams differentials from the 0-stem to the
$(-1)$-stem.

A diagram chase now establishes that the
$\R$-motivic and $\Z/2$-equivariant $E_\infty$-pages are isomorphic for $s = -1$.
This passes to an isomorphism of stable homotopy groups.
\end{proof}

We restate Theorem \ref{thm:pi} 
in an equivalent form that is useful
from the Milnor-Witt degree perspective.

\begin{cor}
\label{cor:MW-iso}
On the $n$th Milnor-Witt stems, the map
$\hpimot_{*,*} \map \hpieq_{*,*}$ is:
\begin{itemize}
\item
an isomorphism in stem $s$ if $2 s \leq 3 n + 5$.
\item
an injection in stem $s$ if $2 s = 3 n + 6$.
\end{itemize}
\end{cor}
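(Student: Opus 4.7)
The plan is to simply rewrite the inequalities of Theorem \ref{thm:pi} after substituting the relationship between the bigrading $(s,w)$ and the Milnor-Witt indexing. An element in the $n$th Milnor-Witt stem with stem $s$ has weight $w$ determined by $s = w + n$, so $w = s - n$.

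First I would substitute $w = s - n$ into the isomorphism range $s \geq 3w - 5$. This gives $s \geq 3(s-n) - 5$, i.e., $3n + 5 \geq 2s$, which is exactly the first bullet of the corollary. Next I would do the same substitution in the boundary equation $s = 3w - 6$ that appears in the injection statement: this becomes $s = 3(s-n) - 6$, i.e., $2s = 3n + 6$, which is the second bullet.

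Since these substitutions are reversible, every bidegree $(s,w)$ with $s \geq 3w - 5$ corresponds to a pair $(n,s) = (s-w, s)$ satisfying $2s \leq 3n + 5$, and similarly on the boundary. Therefore the two bulleted conclusions of Corollary \ref{cor:MW-iso} are just re-parametrizations of the two bullets of Theorem \ref{thm:pi}, and no further work is needed. The only thing to watch is that the corollary is phrased one Milnor-Witt stem at a time, but since the conclusion of Theorem \ref{thm:pi} holds in each bidegree separately, this is automatic.
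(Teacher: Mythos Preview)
Your proposal is correct and matches the paper's own proof essentially verbatim: the paper simply says the corollary is a straightforward algebraic rearrangement of Theorem~\ref{thm:pi} using $n = s - w$, which is exactly the substitution you carry out.
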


\begin{proof}
This is a straightforward algebraic rearrangement of Theorem \ref{thm:pi},
using that $n = s-w$.
\end{proof}


%

\section{Equivariant stable homotopy groups}

Table \ref{tab:pieq} summarizes some of the calculations
of Araki and Iriye \cites{AI,I}.  
The indices across the top indicate the stem $s$, 
while the indices at the left indicate the
Milnor-Witt degree $s-w$.
The $\R$-motivic and $\Z/2$-equivariant
stable homotopy groups are isomorphic in the shaded region, as described in
Theorem \ref{th:main}.

For compactness, we use the following notation to indicate abelian 
groups:
\begin{enumerate}
\item
$\infty = \Z$.
\item
$n=\Z/n$.
\item
$n \cdot m = \Z/n \oplus \Z/m$.
\item
$n^k = (\Z/n)^k$
\end{enumerate}

The symbols $\pi_k$ indicate that the classical stable homotopy
group $\pi_k$ splits via the fixed point map.

Table \ref{tab:pieq} is a companion to \cite{I}*{Table 2}, which 
gives the values of the periodic summands.
The red symbols in Table \ref{tab:pieq} are exceptions to the
periodicity.

\begin{table}[h]
\caption{Some values of $\pieq_{s,w}$}
\label{tab:pieq}

\begin{picture}(100,1)
\put(-110,-14){$\scriptstyle{s}$}
\put(-127,-20){$\scriptstyle{s-w}$}
\put(-123,-10){\line(2,-1){18}}
\end{picture}

\begin{longtable}{r|lllllllll}
& $-2$ & $-1$ & $0$ & $1$ & $2$ & $3$ & $4$ & $5$ & $6$  \\
 \midrule
 $7$ & \same $\pi_7$ & \same $\pi_7$ &  \same $2 \cdot \pi_7$  &  \same $4 \cdot \pi_7$  &  \same $8 \cdot \pi_7$  &  \same $48 \cdot 4 \cdot \pi_7$  &  \same $16 \cdot \pi_7$  &  \same $16 \cdot \pi_7$  & \same  $16 \cdot 2 \cdot \pi_7$ \\
$6$ & \same $\pi_6$ & \same $\pi_6$ & \same  $\infty \cdot \pi_6$  & \same $2 \cdot \pi_6$ & \same  $2^2 \cdot \pi_6$  & \same  $2^2 \cdot \pi_6$  & \same  $2 \cdot \pi_6$  & \same  $2 \cdot \pi_6$  & \same  $2^2 \cdot \pi_6$  \\
$5$ & \same $\pi_5$ & \same $\pi_5$ & \same  $2 \cdot \pi_5$  & \same  $2^2 \cdot \pi_5$  & \same  $2 \cdot \pi_5$  & \same  $12 \cdot \pi_5$  & \same  $\pi_5$  & \same  $\pi_5$  & \same  $\pi_5$  \\
$4$  & \same $\pi_4$ & \same $\pi_4$ & \same  $\infty \cdot \pi_4$  & \same  $\pi_4$  & \same  $\pi_4$  & \same  $2 \cdot \pi_4$  & \same  $2 \cdot \pi_4$  & \same  $2 \cdot \pi_4$  & \same  $4 \cdot \pi_4$ \\
$3$  & \same $\pi_3$ & \same  $\pi_3$  & \same  $2 \cdot \pi_3$  & \same  $4 \cdot \pi_3$  & \same  $8 \cdot \pi_3$  & \same  $24 \cdot 8 \cdot \pi_3$  & \same  $8 \cdot \pi_3$  & \same  $8 \cdot \pi_3$  & \same  $8 \cdot \pi_3$ \\
$2$ &  \same{$\pi_2$}  &  \same{$\pi_2$}  & \same $\infty \cdot \pi_2$  & \same $2 \cdot \pi_2$  & \same $2^2 \cdot \pi_2$  & \same $2 \cdot \pi_2$  &  \same $\pi_2$  &  \same \AIexc{$2$}  &  \AIexc{$2$} \\
$1$ &  \same{$\pi_1$}  &  \same{$\pi_1$}  & \same  $2 \cdot \pi_1$  & \same  $2^2 \cdot \pi_1$  & \same  $2 \cdot \pi_1$  & \same  \AIexc{$24$}  & \same  $0$  &  $0$  &  \AIexc{$0$} \\
$0$  &  \same{$\pi_0$}  &  \same{$\pi_0$}  &  \same $\infty \cdot \pi_0$  &  \same \AIexc{$\infty$}  &  \same \AIexc{$\infty$}  &  \AIexc{$\infty$}  &  \AIexc{$\infty$}  &  \AIexc{$\infty$}  &  \AIexc{$\infty \cdot 2$}  \\
$-1$ & \same{$0$}  & \same{$0$}  &  \same \AIexc{$0$}  &  \same \AIexc{$0$}  &  \AIexc{$0$}  &  \AIexc{$12$}  &  \AIexc{$0$}  &  \AIexc{$0$}  &  \AIexc{$2$}  \\
$-2$ & \same{$0$}  & \same{$0$}  &  $\infty$  &  $2$  &  $2^2$  &  $2^2$  &  $2$  &  $2$  &  $2^2$ \\
$-3$ & \same{$0$}  & \same{$0$} &   $2$  &  $2^2$  &  $2$  &  $12$  &  $0$  &  $0$  &  $0$ \\
\end{longtable}

\begin{longtable}{r|lllllll}
& $7$ & $8$ & $9$ & $10$ & $11$ & $12$ & $13$ \\
\midrule
$7$ & \same  $240 \! \cdot \! 16 \! \cdot \! 2 \! \cdot \! \pi_7$  & \same  $16 \cdot 2 \cdot \pi_7$  & \same  $16 \cdot 2 \cdot \pi_7$  & \same  $16 \cdot \pi_7$ & \same $2016 \! \cdot \! 4 \! \cdot \! \pi_7$  & \same $16 \cdot \pi_7$ & \same $48$ \\
$6$ &  \same $2 \cdot \pi_6$  & \same  $4 \cdot \pi_6$  & \same  $4 \cdot 2 \cdot \pi_6$  & \same  $6 \cdot 2^2 \cdot \pi_6$ & \same $2^2 \cdot \pi_6$  &  $\AIexc{2^2} \cdot \pi_6$  &  \AIexc{$2^2$}  \\
$5$ &  \same $240 \cdot \pi_5$  & \same  $2^3 \cdot \pi_5$  & \same  $2^5 \cdot \pi_5$  & \same  $2^2 \cdot \pi_5$ & $504$  &  $0$  &  $3$  \\
$4$ &  \same $2^2 \cdot \pi_4$  &  \same $2^4 \cdot \pi_4$  &  $2^2$  &  $3$ & $0$  &  $0$  &  $0$  \\
$3$ &  \same \AIexc{$480 \cdot 12 \cdot 4$}  &  \AIexc{$24 \cdot 4$}  &  \AIexc{$24 \cdot 2$}  &  \AIexc{$24$} & \AIexc{$504 \cdot 24$}  &  \AIexc{$24$}  &  \AIexc{$24 \cdot 3$}  \\
$2$ &  \AIexc{$0$}  &  \AIexc{$0$}  &  \AIexc{$2$}  &  \AIexc{$6 \cdot 2$} & \AIexc{$2$}  &  \AIexc{$0$}  &  \AIexc{$0$}  \\
$1$ &  \AIexc{$240$}  &  $2^3$  &  \AIexc{$2^6$}  &  \AIexc{$2^3$} & \AIexc{$504 \cdot 2$}  &  \AIexc{$2$}  &  \AIexc{$6$}  \\
$0$&  \AIexc{$\infty \cdot 2^2$}  &  \AIexc{$\infty \cdot 2^4$}  &  \AIexc{$\infty \cdot 2^2$}  &  \AIexc{$\infty \cdot 3$} & \AIexc{$\infty$}  &  \AIexc{$\infty$}  & \AIexc{$\infty$} \\
$-1$ &  \AIexc{$120 \cdot 2$}  &  \AIexc{$2$}  &  \AIexc{$2$}  &  \AIexc{$0$} & \AIexc{$252$}  & \AIexc{$0$} & \AIexc{$3$} \\
$-2$ &  $2^2$  &  $4^2$  &  $8 \cdot 4 \cdot 2$  &  $24 \cdot 2^3$ & $16 \cdot 2^2$ & $16 \cdot 2$ & $16 \cdot 2$ \\
$-3$ &  $240$  &  $2^3$  &  $2^5$  & $2^2$ & $504$ & $0$ & $3$ \\
\end{longtable}

\end{table}

\begin{bibdiv}
\begin{biblist}

\bibselect{moteq-bib}

\end{biblist}
\end{bibdiv}

\end{document}